\begin{document}
\newtheorem{theorem}{Theorem}[section]
\newtheorem{lemma}[theorem]{Lemma}
\newtheorem{proposition}[theorem]{Proposition}
\newtheorem{corollary}[theorem]{Corollary}
\newtheorem*{definition}{Definition}
\newtheorem*{question}{Question}
\newtheorem{conjecture}{Conjecture}
\newtheorem*{thm}{Theorem}
\newcommand{\F}{\ensuremath{\mathbb F}}
\newcommand{\N}{\mathcal N}
\newcommand{\R}{\mathcal R}
\newcommand{\Z}{\mathbb Z}

\title{Groups having all elements off a normal subgroup with prime power order}
\author{Mark L. Lewis}
\date \
\maketitle

\begin{abstract}
We consider a finite group $G$ with a normal subgroup $N$ so that all elements of $G \setminus N$ have prime power order.  We prove that if there is a prime $p$ so that all the elements in $G \setminus N$ have $p$-power order, then either $G$ is a $p$-group or $G = PN$ where $P$ is a Sylow $p$-subgroup and $(G,P,P \cap N)$ is a Frobenius-Wielandt triple.  We also prove that if all the elements of $G \setminus N$ have prime power orders and the orders are divisible by two primes $p$ and $q$, then $G$ is a $\{ p, q \}$-group and $G/N$ is either a Frobnius group or a $2$-Frobenius group.   If all the elements of $G \setminus N$ have prime power orders and the orders are divisible by at least three primes, then all elements of $G$ have prime power order and $G/N$ is nonsolvable.\\[1ex]
{\it Keywords:} Prime power order, Frobenius groups, Frobenius-Wielandt groups\\[1ex]
{\it 2020 Mathematics Subject Classification:20D99}\\[1ex]
\end{abstract}

\section{Introduction}

In this paper, unless otherwise noted, all groups are finite.  Our goal is to characterize the groups that have a proper normal subgroup where every element outside of the normal subgroup has prime power order.  To understand these groups, it is useful to first understand the groups where all elements have prime power order.  Note that in our question, this is the case where the normal subgroup is trivial.

The question of groups where all elements have prime power order was first addressed by Higman in \cite{Higman2} where he determined the solvable groups with this property.  Suzuki in \cite{suzuki} found the simple groups with this property, and then Brandl in \cite{brandl} completed the classification of these groups (with one omission).  We will list this classification and include more background and discussion on these groups in Section \ref{sec:eppo}. 

We first address the question of a group $G$ with a normal subgroup $N$ and a prime $p$ so that every element in $G \setminus N$ has $p$-power order.  Obviously, if $G$ is a $p$-group, then every normal subgroup $N$ will have this property.  It turns out to obtain the non $p$-groups with this property, one needs to look at a generalization of Frobenius groups that Wielandt studied.  We will introduce Frobenius-Wielandt triples in Section \ref{sec:frob}.  We will prove the following:

\begin{theorem} \label{main one}
Let $G$ be a group, let $N$ be a normal subgroup of $G$, and let $p$ be a prime.  Then all elements of $G \setminus N$ have $p$-power order if and only if either (1) $G$ is a $p$-group or (2) $P$ is a Sylow $p$-subgroup of $G$ so that $G = NP$ and $(G,P,P \cap N)$ is a Frobenius-Wielandt triple.
\end{theorem}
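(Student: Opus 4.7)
The proof splits into two directions.

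For the reverse direction, in case~(1) every element of $G\setminus N$ trivially has $p$-power order.  In case~(2), by the defining property of the Frobenius--Wielandt kernel one has $G\setminus N=\bigcup_{g\in G}(P\setminus(P\cap N))^g$, so each element of $G\setminus N$ is a $G$-conjugate of an element of the $p$-group $P$ and thus has $p$-power order.

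For the forward direction, assume $G$ is not a $p$-group.  Every nontrivial coset of $N$ contains some element of $G\setminus N$, which is a $p$-element by hypothesis; hence $G/N$ is a $p$-group.  Therefore $G=PN$ for any Sylow $p$-subgroup $P$, and $K:=P\cap N$ is a Sylow $p$-subgroup of $N$ normal in $P$ (because $N\triangleleft G$).  The main work is to verify the Frobenius--Wielandt intersection condition $P\cap P^g\leq K$ for all $g\in G\setminus P$.  Writing $g=hn$ with $h\in P$ and $n\in N$ (forcing $n\notin K$), one has $P^g=P^n$, so it suffices to treat $g=n\in N\setminus K$.  Assume for contradiction $x\in P\cap P^n$ with $x\notin K$, and set $y:=nxn^{-1}\in P$.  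The identity
\[
 yx^{-1}=nxn^{-1}x^{-1}=n\cdot(xn^{-1}x^{-1})
\]
exhibits $yx^{-1}$ as a product of the element $n\in N$ and the conjugate $xn^{-1}x^{-1}\in N$, so $yx^{-1}\in N$; combined with $yx^{-1}\in P$ this yields $yx^{-1}=:k\in K$, i.e., the commutator $[n,x]=k$ lies in $K$.

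The main obstacle is to convert this identity into a contradiction.  My plan is to first observe that for every $x\in P\setminus K$ the centralizer $C_G(x)$ is a $p$-group: if $z\in C_N(x)$ had a nontrivial $p'$-part $z_{p'}$, then the commuting product $xz_{p'}\in G\setminus N$ would have non-$p$-power order, contradicting the hypothesis, so $C_N(x)$ is a $p$-group, and since $C_G(x)/C_N(x)$ embeds in the $p$-group $G/N$, so is $C_G(x)$.  By a Frattini argument one can arrange $P$ to normalize a Sylow $q$-subgroup $Q\leq N$ for some prime $q\neq p$ dividing $|N|$, and the analogous argument gives $C_P(q')\subseteq K$ for every $1\neq q'\in Q$, so that $P/K$ acts on $Q$ without fixed points.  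Iterating $x^n=kx$ yields $1=[n^{o(n)},x]=\prod_{i=0}^{o(n)-1}k^{n^i}$; combined with the fixed-point-free action of $P\setminus K$ on $Q$, this should force $k=1$, whence $n\in C_G(x)\cap N$ is a $p$-element commuting with $x$, and a concluding Sylow-theoretic analysis in $N$ then places $n\in K$, the desired contradiction.
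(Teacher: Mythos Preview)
Your reverse direction is correct and matches the paper's argument.  The forward direction, however, has a genuine gap at precisely the step you flag with ``should,'' and the paper takes a much shorter route that bypasses all of your element-wise analysis.

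Your attempt to verify $P\cap P^n\le K$ directly runs aground as follows.  The identity $\prod_{i=0}^{o(n)-1}k^{n^i}=1$ is a relation among $N$-conjugates of the $p$-element $k\in K$, but these conjugates need not lie in $P$, need not commute with one another, and bear no visible relation to the Sylow $q$-subgroup $Q$ on which you established the fixed-point-free action; I see no mechanism by which that action forces $k=1$.  Worse, even granting $k=1$ you would only conclude that $n\in C_N(x)$ is a $p$-element of $N$; since $K$ is merely \emph{one} Sylow $p$-subgroup of $N$, this does not place $n$ in $K$, so the promised ``concluding Sylow-theoretic analysis'' does not close the argument either.  (Concretely, nothing you have written rules out $n$ being a $p$-element lying in $K^m\setminus K$ for some $m\in N$.)

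The paper's forward argument is a two-line count.  Since every element of $G\setminus N$ is a $p$-element, Sylow's theorem gives $G\setminus N=\bigcup_{g\in G}(P\setminus K)^g$.  Then
\[
|G|-|N|=\Bigl|\bigcup_{g}(P\setminus K)^g\Bigr|\le |G:N_G(P)|\bigl(|P|-|K|\bigr)\le |G:P|\bigl(|P|-|K|\bigr)=|G|-|N|,
\]
using $|G:P|\,|K|=|N:K|\,|K|=|N|$.  Equality throughout forces $N_G(P)=P$ and the distinct conjugates $(P\setminus K)^g=P^g\setminus N$ to be pairwise disjoint; for $g\notin P$ this disjointness says $(P\cap P^g)\setminus N=\emptyset$, i.e.\ $P\cap P^g\le P\cap N=K$.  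That is exactly the Frobenius--Wielandt condition, obtained with no structural analysis of $N$ whatsoever.
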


The next obvious question is what can occur when $G \setminus N$ contains all elements that have prime power but not for the same prime.  It turns out that the answer depends whether there are two primes or more than two primes.   Recall that $G$ is a $2$-Frobenius group if there exist normal subgroups $K < L < G$ so that $G/K$ and $L$ are Frobenius groups with Frobenius kernels $L/K$ and $K$, respectively.  We first address the case with two primes.

\begin{theorem} \label{main two}
Let $G$ be a group and let $N$ be a normal subgroup of $G$.  Suppose that all elements of $G \setminus N$ have prime power order and that two distinct primes $p$ and $q$ divide the orders of such elements.  Then the following are true: $G$ is a $\{ p, q \}$-group for distinct primes $p$ and $q$ and either $G/N$ is either a Frobenius group or a $2$-Frobenius group.
\end{theorem}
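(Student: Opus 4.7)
The plan is to first extract the structural conclusion for $G/N$ from the EPPO classification, and then independently argue that $G$ is a $\{p,q\}$-group.

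For the structural part, observe that every non-identity coset $gN \in G/N$ has $g \in G \setminus N$ with $|g|$ a prime power, and since $|gN|$ divides $|g|$, every non-identity element of $G/N$ has prime power order, with primes in $\{p,q\}$ (both appearing). Thus $G/N$ is an EPPO group whose element-order spectrum is exactly $\{p,q\}$. By the classification recalled in Section~\ref{sec:eppo} (Higman, Suzuki, Brandl), every nonsolvable EPPO group has at least three primes in its element-order spectrum, so $G/N$ must be solvable. Higman's structure theorem for solvable EPPO groups then forces $G/N$ to be either a Frobenius group whose kernel and complement are Sylow subgroups, or a $2$-Frobenius group.

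To establish $\pi(G) = \{p,q\}$, suppose for contradiction that $r \in \pi(G) \setminus \{p,q\}$ and let $R$ be a Sylow $r$-subgroup of $G$. All $r$-elements of $G$ lie in $N$---otherwise an element of $G \setminus N$ would have prime-power order with prime outside $\{p,q\}$---so $R \subseteq N$. The Frattini argument gives $G = N \cdot N_G(R)$, whence $N_G(R)/N_N(R) \cong G/N$ is a $\{p,q\}$-group; for each $\ell \in \{p,q\}$, lifting a Sylow $\ell$-subgroup of $G/N$ through the surjection $N_G(R) \to G/N$ produces an element $y_\ell \in N_G(R) \setminus N$ of $\ell$-power order. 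The coprime action of $\langle y_\ell \rangle$ on $R$ splits into two cases: if $C_R(y_\ell) \ne 1$, pick $x \in C_R(y_\ell)$ of order $r$, and then $xy_\ell \in G \setminus N$ has non-prime-power order $r \cdot |y_\ell|$, contradicting the hypothesis. Hence each $y_\ell$ must act fixed-point-freely on $R$.

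The main obstacle is ruling out this residual fixed-point-free scenario, and the plan is to invoke Theorem~\ref{main one} twice by sandwiching $N$ between normal subgroups dictated by the Frobenius (or $2$-Frobenius) structure of $G/N$. Order $p$ and $q$ so that the preimage $K \triangleleft G$ of the bottom Frobenius kernel of $G/N$ satisfies that $K/N$ is a $p$-group and, in the Frobenius case, $G/K$ is a $q$-group. Every element of $G \setminus K$ has prime power order and nontrivial $q$-power image in $G/K$, forcing it to be a $q$-element; Theorem~\ref{main one} applied to $(G,K,q)$ then yields $G = KQ$ for a Sylow $q$-subgroup $Q$ of $G$ and a Frobenius-Wielandt triple $(G, Q, Q \cap K)$. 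Similarly, Theorem~\ref{main one} applied to $(K,N,p)$ gives either that $K$ is a $p$-group---in which case $N \subseteq K$ is a $p$-group, $\pi(G) \subseteq \{p,q\}$, and the contradiction is immediate---or that $(K, P_0, P_0 \cap N)$ is a Frobenius-Wielandt triple. The delicate remaining task is to use the two Frobenius-Wielandt structures in tandem to contradict the fixed-point-free action of the $y_\ell$ on $R$; the $2$-Frobenius case is handled similarly by descending through the two Frobenius layers and invoking Theorem~\ref{main one} one additional time on the intermediate preimage $L \triangleleft G$ of the middle term in the $2$-Frobenius chain. Synthesizing these Frobenius-Wielandt triples into a forced nontrivial fixed point of some $y_\ell$ on $R$ is the technical heart of the proof.
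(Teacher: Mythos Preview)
Your reduction of the structure of $G/N$ and your assembly of the nested Frobenius--Wielandt triples are correct and essentially reproduce the paper's Theorem~\ref{triples}.  The gap is exactly where you say it is: you explicitly leave the ``technical heart'' --- forcing a nontrivial fixed point of some $y_\ell$ on $R$ --- unproved, and the ingredients you have listed (two individual elements $y_p,y_q$ acting on a Sylow $r$-subgroup $R$) are not by themselves enough to finish.

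The paper closes this gap with two ideas you have not invoked.  First, from the Frobenius--Wielandt triples one extracts a \emph{genuine} Frobenius group: with your labeling, $N_G(P_0)$ (the normalizer of the Sylow subgroup inside the inner triple) is a Frobenius group with kernel $P_0$ and complement a $q$-group $Q_0 = N_Q(P_0)$; this is conclusion~(1) of Theorem~\ref{Frobenius}, proved via the Frattini argument and self-normalization of $P_0$ in $K$.  Second, rather than working with an arbitrary Sylow $r$-subgroup $R$, the paper passes to a \emph{minimal normal} subgroup $V$ of $G$ contained in $N$ and inducts: $G/V$ is a $\{p,q\}$-group by induction, so the only issue is $V$ being an $r$-group for $r\notin\{p,q\}$.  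Now the Frobenius group $N_G(P_0)$ acts coprimely on $V$; the kernel $P_0$ cannot centralize $V$ (else elements of order $pr$ appear in $K\setminus N$), so Theorem~15.16 of Isaacs' \emph{Character Theory} forces $C_V(Q_0)\ne 1$, producing elements of order $qr$ in $G\setminus N$.  Your framework of two separate cyclic groups $\langle y_p\rangle,\langle y_q\rangle$ cannot directly access this argument --- one needs the whole Frobenius group, not just two elements, to invoke the Frobenius action theorem.  You also do not address the case where the minimal normal subgroup $V$ is nonsolvable; the paper handles this by a separate Frattini-type descent to $N_G(R)$, which is a proper subgroup satisfying the same hypotheses, contradicting minimality of a nonsolvable counterexample.
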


In fact, we will give a necessary a sufficient condition for this case in terms of multiple Frobenius-Wielandt triples.  Using the results for the case with two primes we are able to obtain the following result for three or more primes.

\begin{theorem} \label{main three}
Let $G$ be a group and let $N$ be a normal subgroup of $G$.  Suppose that all elements of $G \setminus N$ have prime power orders and that at least three distinct primes divide the orders of such elements.  Then all elements in $G$ have prime power order.  In fact, $G$ is one of the groups listed in Theorem \ref{nonsolvable}.
\end{theorem}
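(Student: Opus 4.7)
The plan is to reduce to showing that every element of $G$ has prime power order, after which Theorem \ref{nonsolvable} (the Suzuki--Brandl classification of nonsolvable EPPO groups) identifies $G$ completely. First I would establish that $G/N$ is itself a nonsolvable EPPO group: every nontrivial coset $gN$ has a representative of prime power order in $G \setminus N$, so $|gN|$ is a prime power, hence $G/N$ is EPPO. Since the three distinct primes from our hypothesis divide $|G/N|$, Higman's classification of solvable EPPO groups rules out solvability. A useful ``monochromaticity'' refinement follows: if $|gN|=p^a$ then every element of the coset $gN$ has $p$-power order, because any lift has some prime power order $q^k$ and $|gN|$ must divide $q^k$, forcing $q=p$.

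The crux is then to show $N$ itself is EPPO. For each pair of primes $\{p,q\}$ such that $G/N$ contains a subgroup $K/N$ whose order is divisible by exactly these two primes, I consider the preimage $K$ in $G$. By monochromaticity every element of $K \setminus N$ has $p$-power or $q$-power order, and both prime-power types are actually attained in $K \setminus N$ since both $p$ and $q$ divide $|K/N|$. Theorem \ref{main two} applied to $(K, N)$ then forces $K$ to be a $\{p,q\}$-group, so no prime outside $\{p,q\}$ divides $|N|$. Intersecting these constraints over admissible pairs pins down the primes dividing $|N|$.

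The main obstacle is to verify, for each possibility of $G/N$ from Brandl's list, that enough two-prime subgroups exist so that the intersection of the sets $\{p,q\}$ collapses to a single prime or is empty. In each case one inspects the subgroup lattice: for $A_5$ the subgroups $S_3$ and $D_{10}$ do it (giving intersection $\{2\}$); for $L_2(7)$ the subgroups $S_4$ and the Frobenius group $C_7 \rtimes C_3$ give intersection $\{3\}$; analogous choices work for $L_2(8)$, $L_2(9)$, $L_2(17)$, $L_3(4)$, $Sz(q)$, and $M_{10}$. Once the primes dividing $|N|$ lie in a single set $\{p\}$, $N$ is a $p$-group, hence EPPO, and combined with the hypothesis on $G \setminus N$ we conclude that $G$ is EPPO. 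Then Theorem \ref{nonsolvable} identifies $G$ as one of the listed groups, completing the proof.
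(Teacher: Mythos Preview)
Your proposal is correct and follows essentially the same approach as the paper. The paper packages the argument into an intermediate result (Theorem \ref{last}) but the core idea is identical: show $G/N$ is a nonsolvable EPPO group, then for each group on Brandl's list locate two-prime subgroups $K/N$, apply Theorem \ref{main two} to the preimages $K$ to constrain the prime divisors of $|N|$, and intersect until $N$ is forced to be a $p$-group. The only cosmetic differences are that the paper always chooses $\{2,p_i\}$-subgroups (dihedral or small Frobenius groups) so that the intersection is always $\{2\}$, whereas you allow pairs like $\{3,7\}$ for $L_2(7)$; and you make the monochromaticity observation explicit, which the paper uses implicitly when invoking Corollary \ref{pqfrob}. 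One small omission in your case analysis: Brandl's list also contains the groups $V.\mathrm{PSL}_2(4)$, $V.\mathrm{PSL}_2(8)$, $V.\mathrm{Sz}(8)$, $V.\mathrm{Sz}(32)$ with $V$ a nontrivial elementary abelian $2$-group, and you should note (as the paper does) that in those cases the product of $V$ with any odd-order cyclic subgroup supplies the needed two-prime subgroups.
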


The question of groups with a normal subgroup having all elements outside of the normal subgroup with prime power order arose in our Theorem 3.2 of preprint with Bianchi, Camina, and Pacifici \cite{ourpre}.

\section{Groups with prime power order}\label{sec:eppo}

As we stated in the Introduction, Higman introduced the study of groups where all elements have prime power order in \cite{Higman2}.  Suzuki continued by finding the simple groups with this property in \cite{suzuki}, and Brandl completed the classification for all finite groups.  A couple of more recent papers have also given the complete classification of these groups apparently unaware of Brandl's paper (\cite{hein} and \cite{ShiYang2}).  In the literature, these groups have been called CP-groups and EPPO-groups.  We note that the classification has been further refined to groups where all elements have prime order in \cite{Cheng93} and \cite{Deaconescu89}.  Furthermore, we obtain motivation from the fact that the groups $G$ having a normal subgroup $N$ so that every element in $G \setminus N$ has prime order has been classified by Qian in \cite{Qian2005}.  (Unfortunately, \cite{Qian2005} is in Chinese and hence, we do not know of an English version of this proof.  Also, Qian's classification does not seem  to follow immediately follow from ours.  The condition of the elements having prime order seems to be significantly stronger than prime power order, so we have not included this classification.)  We should also mention that classifying normal subgroups with all elements outside having prime order for a given prime gets involved with the Hughes' subgroup and the associated questions.  See  \cite{HVL} and the references therein for one perspective on that problem.

Using modern tchniques, it is not difficult to replicate Higman's result.  In particular, the {\it Grueneberg-Kegel graph} (GK-graph) of a group $G$ is the graph whose vertex set is the set of prime divisors of $|G|$.  There is an edge between primes $p$ and $q$ if there is an element $g \in G$ so that $pq$ divides the order of $g$.  (Often, this graph has been called the prime graph, but since the term prime graph has been applied to more than one graph associated with the group, we prefer the name Grueneberg-Kegel since it is unambiguous.)  Recall that an empty graph is a graph no edges.  It is not difficult to see that all elements in a group $G$ have prime power order if and only if the GK-graph of $G$ is an empty graph.  Also, a group $G$ is a $p$-group for some prime $p$ if and only if the GK-graph consists of a single vertex.  Now, when $G$ is a group having prime power order that is not a $p$-group, then each vertex will be a connected component of the GK-graph and the GK-graph will disconnected.  Williams shows in \cite{will} that a solvable group with a disconnected GK-graph will be either a Frobenius group or $2$-Frobenius group and hence, have two connected components.  In our case, that implies that $G$ will be a $\{ p, q\}$-group for distinct primes $p$ and $q$.  We will remind the read of Frobenius and $2$-Frobenius groups in Section \ref{sec:frob}.  For reference in the sequel, we write out explicitly the classification of solvable groups with all elements having prime power order.

\begin{theorem}[Higman]\label{solvable}
Let $G$ be a solvable group.  Then every element of $G$ has prime power order if and only if one of the following occurs:
\begin{enumerate}
	\item $G$ is a $p$-group for some prime $p$.
	\item There exist distinct primes $p$ and $q$ so that $G$ is a $\{p,q\}$-group and either $G$ is a Frobenius group or $G$ is a $2$-Frobenius group.
\end{enumerate} 
\end{theorem}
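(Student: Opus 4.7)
The plan is to interpret the hypothesis in the language of the GK-graph from the preceding discussion, and then import Williams' structure theorem for solvable groups with disconnected GK-graph as a black box.

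For the easy direction ($\Leftarrow$), the case of a $p$-group is trivial. Suppose $G$ is a Frobenius $\{p,q\}$-group with kernel $K$ and complement $H$. Since $\gcd(|K|, |H|) = 1$ and $|G| = |K||H|$ is divisible only by $p$ and $q$, one of $K, H$ is a $p$-group and the other a $q$-group; every element of $G$ then lies in $K$ or in some conjugate of $H$, and in either case has prime power order. For a 2-Frobenius $\{p,q\}$-group with $1 < K < L < G$ satisfying the standard conditions, the same coprimality arguments force $|K|$, $|L/K|$, and $|G/L|$ to be prime powers that alternate between $p$ and $q$. Elements of $L$ are handled by the Frobenius structure of $L$; for $g \in G \setminus L$, the image $gK$ is a nontrivial element of a Frobenius complement in $G/K$, hence has prime power order, so $g^{p^k} \in K$ for some $k$, and since $|K|$ is a prime power $g$ itself has prime power order.

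For the hard direction ($\Rightarrow$), the hypothesis says exactly that the GK-graph of $G$ has no edges. If $|G|$ is a prime power we are in case (1). Otherwise the GK-graph has at least two vertices, all of them isolated, so it is disconnected with each connected component a single vertex. Williams' theorem \cite{will} asserts that every solvable group with a disconnected GK-graph is either a Frobenius group or a 2-Frobenius group, and in particular has exactly two connected components. Therefore $G$ has exactly two prime divisors $p$ and $q$ and fits case (2).

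The main obstacle is the appeal to Williams' result, which silently packages the entire structural classification, including the fact that the number of connected components is exactly two. Replacing this citation by a self-contained argument would require a direct analysis of a minimal counterexample via its Fitting subgroup together with a system of Hall subgroups --- essentially reproving the relevant portion of \cite{will} in this edgeless special case.
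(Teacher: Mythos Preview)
Your approach is essentially identical to the paper's: the paper does not give a formal proof of this theorem but sketches precisely this argument in the paragraph preceding the statement, observing that the hypothesis is equivalent to the GK-graph being empty and then invoking Williams~\cite{will} to conclude that a solvable group with disconnected GK-graph is Frobenius or $2$-Frobenius with exactly two components. Your write-up is in fact more complete than the paper's, since you supply the easy direction explicitly (with one minor imprecision: for $g\in G\setminus L$ the coset $gK$ lies in some \emph{conjugate} of a Frobenius complement rather than in a fixed complement, but this does not affect the order argument).
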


We next also explicity write down the classification of nonsolvable groups with all elements have prime power order.  We note that Brandl misses the group $M_{10}$ (this is the nonsplit extension of ${\rm PSL}_2 (9)$ by $Z_2$ which occurs as a point stabilizer in $M_{11}$) in \cite{brandl}, but this group is mentioned in \cite{hein} and \cite{ShiYang2}.

\begin{theorem}[Brandl]\label{nonsolvable}
Let $G$ be a solvable group.  Then every element of $G$ has prime power order if and only if one of the following occurs:
\begin{enumerate}
	\item $G$ is isomorphic to ${\rm PSL}_2 (7)$, ${\rm PSL}_2 (9)$, ${\rm PSL}_2 (17)$, ${\rm PSL}_3 (4)$, or $M_{10}$.
	\item $G$ has a normal subgroup $N$ so that $G/N$ is isomorphic to one of ${\rm PSL}_2 (4)$, ${\rm PSL}_2 (8)$, ${\rm Sz} (8)$, or ${\rm Sz} (32)$ and either $N = 1$ or $N$ is a nontrivial, elementary abelian $2$-group that is isomorphic to a direct sum of natural modules for $G/N$.
\end{enumerate}	
\end{theorem}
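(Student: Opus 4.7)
The plan is to split the proof into a reduction to the simple case (via Suzuki's classification), an analysis of the almost simple quotient, and finally a determination of the module structure of the solvable radical. Throughout I use the basic fact that the EPPO property passes to subgroups and to quotients, because the order of every element in a subgroup or quotient divides the order of some element of $G$.

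First, I would observe that every composition factor of a nonsolvable EPPO group is again an EPPO simple group, so at least one composition factor $S$ is a nonabelian simple EPPO group. By Suzuki's classification in \cite{suzuki}, $S$ must lie in the list $\mathrm{PSL}_2(4)$, $\mathrm{PSL}_2(7)$, $\mathrm{PSL}_2(8)$, $\mathrm{PSL}_2(9)$, $\mathrm{PSL}_2(17)$, $\mathrm{PSL}_3(4)$, $\mathrm{Sz}(8)$, $\mathrm{Sz}(32)$. Letting $N=R(G)$ be the solvable radical and $\overline{G}=G/N$, the socle $\mathrm{Soc}(\overline{G})$ is a direct product of nonabelian simple groups from this list; but two commuting copies of such a group would contain an element whose order is a product of two distinct primes from the two factors, contradicting EPPO. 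Hence $\overline{G}$ is almost simple with socle one of the eight groups above.

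Next, I would determine which almost simple extensions of each such $S$ remain EPPO by inspecting $\mathrm{Out}(S)$ and using the character tables. For $S=\mathrm{PSL}_2(9)$ the three index-two extensions are $S_6$, $\mathrm{PGL}_2(9)$, and $M_{10}$; the first two contain elements of order $6$ and $10$ respectively and so are discarded, while $M_{10}$ is EPPO. For the other seven socles an analogous check (using, for instance, that field and diagonal automorphisms of $\mathrm{PSL}_2(q)$ or $\mathrm{Sz}(q)$ introduce elements of mixed order in the extension) shows that only the simple group itself survives. This yields exactly the list of possible $\overline{G}$ appearing in conclusions (1) and (2).

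Finally, I would analyze $N$. Since $\overline{G}$ acts on $N$ and the only primes dividing any element order of $G$ are those dividing some element order of $N$ together with those of $\overline{G}$, a Hall-Higman style argument forces $N$ to be a $p$-group for a single prime $p$ (otherwise a pair of coprime elements of $N$ would sit in an abelian subgroup and generate a mixed-order element). One then shows $N$ is elementary abelian by the same mixed-order argument applied to $N$ and $[N,N]$. Because $\overline{G}$ contains elements of every prime order dividing $|\overline{G}|$, and these must act fixed-point-freely on $N\setminus\{1\}$ to avoid creating elements of mixed order, only primes $p$ coprime to $|\overline{G}|$ could occur in $|N|$ except when $p$ has a corresponding Sylow in $\overline{G}$ acting in a very restricted way. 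For $\overline{G}\in\{\mathrm{PSL}_2(7),\mathrm{PSL}_2(9),\mathrm{PSL}_2(17),\mathrm{PSL}_3(4),M_{10}\}$ a case-by-case check (using the list of absolutely irreducible $\mathbb{F}_p[\overline{G}]$-modules for $p\nmid|\overline{G}|$, and for $p\mid|\overline{G}|$ the known modular representations) shows no such nontrivial module exists, forcing $N=1$ and conclusion (1). For $\overline{G}\in\{\mathrm{PSL}_2(4),\mathrm{PSL}_2(8),\mathrm{Sz}(8),\mathrm{Sz}(32)\}$ the only $p$ for which an EPPO extension can arise is $p=2$, and the module must be a sum of natural modules.

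The main obstacle is this last step: showing that, among all faithful $\mathbb{F}_2[\overline{G}]$-modules, only direct sums of natural modules produce an EPPO extension. This requires explicit knowledge of the $2$-modular irreducibles of $\mathrm{PSL}_2(2^n)$ and $\mathrm{Sz}(2^{2n+1})$ and, for each non-natural irreducible, the production of a semisimple element of odd order with a nonzero fixed vector — which in the semidirect product yields an element of order $2r$ for some odd prime $r$, violating EPPO. Once the natural module is singled out, complete reducibility of the $\mathbb{F}_2[\overline{G}]$-action (which must be checked separately, since $p=2$ divides $|\overline{G}|$) gives the direct-sum decomposition claimed.
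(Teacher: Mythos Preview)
The paper does not prove this theorem at all: it is quoted verbatim as a background classification due to Brandl (with the single correction that $M_{10}$ was omitted in \cite{brandl} and is supplied from \cite{hein} and \cite{ShiYang2}). There is therefore no in-paper argument to compare your sketch against; the result is used as a black box in the proofs of Theorem~\ref{last} and Theorem~\ref{main three}. Note also that the hypothesis ``$G$ solvable'' in the displayed statement is a typographical slip for ``$G$ nonsolvable''.

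As for the sketch itself, the overall architecture (Suzuki's list for the simple composition factor, reduction to an almost simple quotient, then analysis of the solvable radical as a module) is the standard one and is essentially how Brandl, Heineken, and Shi--Yang proceed. Two places deserve more care than you give them. First, the passage from ``$N$ is a $p$-group'' to ``$N$ is elementary abelian'' is not just a mixed-order argument inside $N$; one needs that an odd-order element of $\overline{G}$ acting on a nontrivial $p$-group with $p=2$ has a regular orbit (or the analogous fixed-point-free statement), which forces $\Phi(N)=1$ rather than merely $[N,N]=1$. Second, and more seriously, your final paragraph correctly flags the real difficulty: for $p=2$ dividing $|\overline{G}|$ there is no Maschke theorem, so one cannot simply assert complete reducibility and read off a direct sum of natural modules. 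In Brandl's treatment this is handled by an explicit cohomological and module-theoretic analysis specific to $\mathrm{PSL}_2(2^n)$ and $\mathrm{Sz}(2^{2n+1})$; your proposal acknowledges the obstacle but does not actually resolve it, so as written the last step is a genuine gap rather than a proof.
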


\section{Frobenius-Wielandt triples} \label{sec:frob}

The study of Frobenius groups is one of the more celebrated topics in group theory.  Recall that a proper, nontrivial subgroup $H$ of a group $G$ is called a {\it Frobenius complement} if $H \cap H^g = 1$ for all $g \in G \setminus H$.  A group $G$ is called a {\it Frobenius group} if it contains a Frobenius complement $H$.  Frobenius' theorem states that if $N = G \setminus \cup_{g \in G} (H \setminus 1)^g$, then $N$ is a normal subgroup of $G$.  In addition $G = H N$ and $H \cap N = 1$.  The subgroup $N$ is called the {\it Frobenius kernel} of $G$.  It is known that $(|N|,|H|) = 1$ and that the Sylow subgroups of $H$ are either cyclic or generalized quaternion groups.  Also, Thompson proved that $N$ is nilpotent.  For a full treatment of Frobenius groups, we suggest that the reader refer to \cite{hup} or Chapter 6 of \cite{grouptxt}.

In \cite{wielandt}, Wielandt studies groups $G$ that have a proper, nontrivial subgroup $H$ and a normal subgroup $L$ of $H$ that is proper in $H$ for which $H \cap H^g \le L$ for all $g \in G \setminus H$.  In this situation, Epuelas in \cite{espuelas} says that $(G,H,L)$ is a Frobenius-Wielandt group.  Following \cite{burkett}, we say that $(G,H,L)$ is a {\it Frobenius-Wielandt triple}, if $L$ is normal in $H$ and $H \cap H^g \le L$ for all $g \in G \setminus H$.  Observe that $H$ is a Frobenius complement if $L = 1$; so this a generalization of Frobenius complements.  In \cite{wielandt}, Wielandt proves that $H$ and $L$ determine a unique normal subgroup $N$ so that $G = NH$ and $N \cap H = L$.  In fact, $N = G \setminus \cup_{g \in G}(H \setminus L)^g$.  The subgroup $N$ is called the {\it Frobenius-Wielandt kernel}.  It is not difficult to see when $L = 1$ that $N$ is the usual Frobenius kernel.  Wielandt also proved in that paper that $(|G:H|,|H:L|) = 1$.  We note that Section 4 of \cite{KnSch} gives a different proof Wielandt's results.

It is a natural question to ask how closely related to Frobenius groups are Frobenius-Wielandt triples.  This is the question addressed by Espuelas in \cite{espuelas}.  In that paper, among other things he proves that if $H$ splits over $N$ and $|H/N|$ is even, then $H/N$ is isormorphic to a Frobenius complement and if $|N|$ is odd and $q$ is a prime divisor of $|N|$ so that a Sylow $q$-subgroup of $N$ is abelian and is complemented in a Sylow $q$-subgroup of $H$, then the Sylow $q$-subgroups of $H/N$ are cyclic.  On the other hand, Scopolla in \cite{scopolla} proves that if $P$ is any $p$-group, then there is a group $G$ with a Sylow $p$-subgroup $Q$ and subgroup $L$ normal in $Q$ so that $(G,Q,L)$ is a Frobenius-Wielandt triple and $Q/L \cong P$.

We now prove some results regarding Frobenius-Wielandt triples.  

\begin{lemma} \label{frob quo}
If $(G,H,L)$ is a Frobenius-Wielandt triple, then either $N_G (L) = H$ or $N_G (L)/L$ is a Frobenius group.
\end{lemma}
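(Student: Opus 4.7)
The plan is to set $M = N_G(L)$, observe that $H \le M$ since $L \trianglelefteq H$, and show that in the nontrivial case ($M > H$) the quotient $M/L$ is Frobenius with complement $H/L$. Everything will reduce to transferring the Frobenius--Wielandt intersection condition to the quotient $M/L$.

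First I would record the setup: since $L \trianglelefteq M$ and $L \trianglelefteq H \le M$, the subgroup $H/L$ of $M/L$ makes sense. It is nontrivial because $L$ is proper in $H$ by definition of a Frobenius-Wielandt triple, and if we are in the second case it is proper in $M/L$ because we are assuming $N_G(L) \neq H$. The main step will be to verify the Frobenius complement condition
\[
(H/L) \cap (H/L)^{gL} = 1 \quad \text{for all } gL \in M/L \setminus H/L.
\]
For such a $g$, because $g \in M = N_G(L)$, conjugation by $gL$ in $M/L$ is induced by conjugation by $g$ in $M$, and $L^g = L$ gives $L \le H^g$. Combined with $L \le H$, this yields $L \le H \cap H^g$. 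On the other hand, $g \notin H$ so the Frobenius--Wielandt hypothesis forces $H \cap H^g \le L$. Therefore $H \cap H^g = L$, and passing to $M/L$ gives $(H/L)\cap(H/L)^{gL} = L/L = 1$, as needed.

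Once this is in hand, $H/L$ is a proper nontrivial subgroup of $M/L$ meeting each of its distinct conjugates trivially, hence is a Frobenius complement, and so $M/L = N_G(L)/L$ is a Frobenius group, completing the second alternative.

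I do not see a serious obstacle here: the only subtle point is making sure that $L$ is contained in $H^g$ so that $H^g/L$ is literally a subgroup of $M/L$ and the intersection in the quotient is the image of the intersection in $M$; this is precisely what the hypothesis $g \in N_G(L)$ gives, and it is the reason the statement passes to $N_G(L)$ rather than to all of $G$.
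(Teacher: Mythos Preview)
Your proof is correct and follows essentially the same approach as the paper: both arguments set $M=N_G(L)$, assume $H<M$, and verify directly that $H/L$ is a Frobenius complement in $M/L$ by showing $(H/L)\cap (H^g/L)=L/L$ for $g\in M\setminus H$. You are in fact more explicit than the paper about why $L\le H^g$ (so that $H^g/L$ is a genuine subgroup of $M/L$), which is exactly the point the paper sweeps under the phrase ``$(H\cap H^x)/L$''.
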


\begin{proof}
Obviously, we have $H \le N_G (L)$.  Suppose $H < N_G (L)$.  Thus, we know that $H/L \cap H^x/L = (H \cap H^x)/L \le L/L$ for all $x \in N_G (L) \setminus H$.  It follows that $H/L$ is a Frobenius complement in $N_G (L)/L$.  We conclude that $N_G (L)/L$ is a Frobenius group.
\end{proof}

\begin{corollary}
If $(G,H,L)$ is a Frobenius-Wielandt triple and $L$ is normal in $G$, then $G/L$ is a Frobenius group.	
\end{corollary}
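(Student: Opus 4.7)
The plan is to derive this corollary as an essentially immediate consequence of Lemma \ref{frob quo}. Since $L$ is assumed to be normal in the whole group $G$, we have $N_G(L) = G$. Applying Lemma \ref{frob quo}, there are two cases: either $N_G(L) = H$, or $N_G(L)/L$ is a Frobenius group.

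In the first case, we would get $G = N_G(L) = H$, which contradicts the standing requirement in the definition of a Frobenius-Wielandt triple that $H$ is a \emph{proper} subgroup of $G$. Hence that case cannot occur, and we must be in the second case, which gives $G/L = N_G(L)/L$ is a Frobenius group, as desired.

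So the proof is really just an application of the previous lemma together with the observation that $H$ being proper rules out the degenerate alternative. The only thing that could be viewed as an obstacle is making sure the definition of a Frobenius-Wielandt triple used here really does insist that $H$ be proper in $G$; reading back through Section \ref{sec:frob}, this is built in from Wielandt's original setup (\textquotedblleft proper, nontrivial subgroup $H$\textquotedblright), so no additional hypothesis is needed. There are no calculations to grind through.
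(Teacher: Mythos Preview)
Your proof is correct and is exactly the argument the paper intends: the corollary is stated immediately after Lemma~\ref{frob quo} with no separate proof, so it is meant to follow at once by taking $N_G(L)=G$ and ruling out $H=G$ via the properness of $H$.
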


\begin{lemma} \label{conjugacy}
Let $N$ be a normal subgroup of a group $G$.  Suppose $H$ is a subgroup of $G$ so that $G = HN$.  Then every element of $G \setminus N$ is conjugate to an element in $H$ if and only if $(G,H,H \cap N)$ is a Frobenius-Wielandt triple.
\end{lemma}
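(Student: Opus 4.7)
The plan is a counting argument. Set $L = H \cap N$ and $X = H \setminus L$, so that $X \subseteq G \setminus N$ (since $L = H \cap N$), and, as $N \trianglelefteq G$, every conjugate $X^g$ still lies in $G \setminus N$. The hypothesis that every element of $G \setminus N$ is conjugate to some element of $H$ is equivalent to $\bigcup_{g \in G} X^g = G \setminus N$, because a conjugate of an element of $L$ lies in $N$, so the only elements of $H$ that can be conjugate to something in $G \setminus N$ are those in $X$. The key arithmetic input, coming from $|G| = |HN| = |H|\,|N|/|L|$, is
\[
|G \setminus N| \;=\; |G| - |N| \;=\; \frac{|H|\,|N|}{|L|} - |N| \;=\; [G:H]\cdot (|H|-|L|) \;=\; [G:H]\cdot |X|.
\]

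Since $L$ is normal in $H$ (as the intersection of $H$ with a normal subgroup of $G$), each $h \in H$ satisfies $X^h = X$, so $H \le N_G(X)$. Fixing a transversal $T$ for $H$ in $G$ containing $1$, I would rewrite
\[
\bigcup_{g \in G} X^g \;=\; \bigcup_{t \in T} X^t,\qquad \text{so}\qquad \Bigl|\bigcup_{g \in G} X^g\Bigr| \;\le\; |T|\cdot |X| \;=\; [G:H]\cdot |X|.
\]
This upper bound matches $|G \setminus N|$ exactly, so the whole argument reduces to deciding when equality holds, and that is determined precisely by whether the conjugates $\{X^t : t \in T\}$ are pairwise disjoint.

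For the ``if'' direction, assume $(G,H,L)$ is a Frobenius--Wielandt triple. For distinct $t,t' \in T$ we have $t'^{-1}t \notin H$; if some $x$ lay in $X^t \cap X^{t'}$, then $x^{t'^{-1}}$ would be in $X \cap X^{t^{-1}t'} \subseteq H \cap H^{t^{-1}t'} \le L$, contradicting $x^{t'^{-1}} \in X = H \setminus L$. Hence the $X^t$ are pairwise disjoint and $\bigl|\bigcup_g X^g\bigr| = [G:H]\cdot |X| = |G \setminus N|$; together with $\bigcup_g X^g \subseteq G \setminus N$ this gives equality, as required. For the ``only if'' direction, the hypothesis forces the inequality above to be an equality, so the $X^t$ are pairwise disjoint. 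Then for any $g \in G \setminus H$ and any $h \in H \cap H^g$, writing $g = th$ with $t \in T \setminus \{1\}$ and $h \in H$ gives $X^g = X^t$; if $h \notin L$ then $h \in X$ and, pulling back $h = k^g$ with $k \in H$, one sees $k \notin L$ (else $h \in N$, contradicting $h \in X$), so $k \in X$ and $h \in X \cap X^t = \emptyset$, a contradiction. Thus $H \cap H^g \le L$ for every $g \in G \setminus H$.

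I do not expect any serious obstacle: the argument is essentially the standard union--of--conjugates count used for Frobenius groups, enabled here by the clean index identity $|G \setminus N| = [G:H]\cdot|H \setminus L|$ that $G = HN$ supplies. The only point that needs care is checking that an element of $H \cap H^g$ can always be ``pulled back'' to an element of $X$ rather than $L$, which is where the normality of $N$ (and hence of $L$ in $H$) is used.
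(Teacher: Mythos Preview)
Your argument is correct and is essentially the same counting argument the paper gives: both establish $|G\setminus N|=[G:H]\,|H\setminus L|$ and force the conjugates of $H\setminus L$ to be pairwise disjoint, whence $H\cap H^g\le L$ for $g\notin H$. A couple of cosmetic slips to clean up: your transversal should be a \emph{right} transversal so that $X^{ht}=X^t$, the displayed intersection should read $X\cap X^{tt'^{-1}}\subseteq H\cap H^{tt'^{-1}}$ (with $tt'^{-1}\notin H$), and in the converse you reuse the letter $h$ for two different elements; none of this affects the mathematics.
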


\begin{proof}
Suppose first that $(G,H,H \cap N)$ is a Frobenius-Wielandt triple.  Since $G = NH$ and $N = G \setminus \cup_{x \in G}(H \setminus H \cap N)^x$, we see that $G \setminus N = \cup_{x \in G} (H \setminus H \cap N)^x$.  Thus, every element of $G \setminus N$ is conjugate to an element in $H$.  Conversely, suppose every element in $G \setminus N$ is conjugate to an element in $H$.  This implies that $G \setminus N = \cup_{x \in G}(H \setminus N)^x$.  Note that the number of conjugates of $H \setminus N$ is $|G:N_G (H)| \ge |G:H|$.  We have 
\begin{eqnarray*}
	|G \setminus N | &=& |\cup_{x \in G} (H \setminus N)^x| \le \sum_{x \in G} |H \setminus N| \\
	&\le& |G:N_G (H)| (|H| - |H \cap N|) \\
	&\le& |G:H| (|H| - |H \cap N| ) = |G:H||H| - |G:H||H \cap N| \\
	&= &|G| - |N:H \cap N||H \cap N| = |G| - |N| = |G \setminus N|.
\end{eqnarray*}

Thus, we must have equality throughout the equation.  This implies that $N_G (H) = H$ and $H \setminus H \cap N$ and $(H \setminus H \cap N)^x$ are disjoint when $x \not\in H$, so $H \cap H^x \subseteq H \cap N$ for all $x \in G \setminus H$.  Thus, $(G,H,H \cap N)$ is a Frobenius-Wielandt triple.
\end{proof}

We next show that quotients that are Frobenius groups yield Frobenius-Weilandt triples.

\begin{corollary}
Let $N$ be a normal subgroup of a group $G$.  If $G/N$ is a Frobenius group with Frobenius complement $H/N$, then $(G,H,N)$ is a Frobenius-Wielandt triple.
\end{corollary}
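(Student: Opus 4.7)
The plan is to verify the definition of Frobenius-Wielandt triple for $(G, H, N)$ directly, using the Frobenius structure of the quotient $G/N$. Since $H/N$ is a Frobenius complement in $G/N$, it is by definition a proper, nontrivial subgroup of $G/N$, and this immediately lifts to say that $H$ is a proper subgroup of $G$ properly containing $N$. In particular, $N$ is a proper normal subgroup of $H$ (normality is free from $N \triangleleft G$), so the two structural requirements on the triple are satisfied at once.

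The remaining task is to show that $H \cap H^g \le N$ for every $g \in G \setminus H$. The key observation is that because $N$ is normal in $G$, conjugation of $H$ by any $g$ descends to the quotient as $(H^g)/N = (H/N)^{gN}$. Since $N \le H$, the condition $g \in G \setminus H$ is equivalent to $gN \in (G/N) \setminus (H/N)$. I would then apply the Frobenius property in $G/N$: the complement $H/N$ satisfies $(H/N) \cap (H/N)^{gN} = 1_{G/N}$ whenever $gN \notin H/N$. Translating this back, $(H \cap H^g)/N = 1$, i.e., $H \cap H^g \le N$, which is exactly what is required.

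This verifies all three conditions in the definition of Frobenius-Wielandt triple, so $(G, H, N)$ is such a triple. There is no real obstacle here: the argument is just the correspondence theorem together with the defining property of a Frobenius complement in the quotient, and it illustrates the point noted earlier that Frobenius-Wielandt triples generalize Frobenius complements (the case $L = 1$ being recovered when $N = 1$).
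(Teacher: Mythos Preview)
Your proof is correct and arguably more transparent than the paper's. You verify the defining condition $H \cap H^g \le N$ directly via the correspondence theorem: since $N$ is normal and contained in $H$, the intersection $H \cap H^g$ contains $N$, and $(H \cap H^g)/N = (H/N) \cap (H/N)^{gN}$, which is trivial for $gN \notin H/N$ by the Frobenius complement property. The paper instead routes the argument through Lemma~\ref{conjugacy}: it introduces the preimage $M$ of the Frobenius kernel, observes $G = HM$ with $H \cap M = N$, notes that every element of $G \setminus M$ maps to a nonidentity coset outside the kernel and is therefore conjugate into $H$, and then invokes Lemma~\ref{conjugacy} (with $M$ playing the role of the normal subgroup there) to conclude that $(G,H,H\cap M) = (G,H,N)$ is a Frobenius--Wielandt triple. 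Your approach avoids the counting lemma entirely and uses only the definition, while the paper's approach illustrates Lemma~\ref{conjugacy} in action; both are short, but yours is the more self-contained of the two.
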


\begin{proof}
Let $M/N$ be the Frobenius kernel of $G/N$.   We know that $G/N = (M/N)(H/N)$ and $(M/N) \cap (H/N) = N/N$.  It follows that $G = HM$ and $H \cap M = N$.  Let $g \in G \setminus N$.  We know that $gN$ is conjugate to an element of $H/N$.  It follows that $g$ is conjugate to an element of $H$.  Hence, every element of $G \setminus N$ is conjugate to an element of $H$.  By Lemma \ref{conjugacy}, we see that $(G,H,N)$ is a Frobenius-Wielandt triple.	
\end{proof}

\section{Normal subgroups and elements of prime power order}

Using Frobenius-Wielandt triples, we can determine the groups $G$ and primes $p$ with a normal subgroup $N$ so that every element of $G \setminus N$ has $p$-power order.  We now are ready to prove Theorem \ref{main one} from the Introduction which we restate.

\begin{theorem} \label{p-power}
Let $G$ be a group, let $N$ be a normal subgroup, and let $p$ be a prime.  If $P$ is a Sylow $p$-subgroup of $G$, then every element of $G \setminus N$ has $p$-power order if and only if  either (1) $G = P$ or (2) $G = PN$ and $(G,P,P \cap N)$ is a Frobenius-Wielandt triple.
\end{theorem}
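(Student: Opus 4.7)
The plan is to establish both directions of the biconditional, with Lemma \ref{conjugacy} serving as the central tool. The reverse direction is essentially immediate: in case (1), $G = P$ is itself a $p$-group, so every element of $G$ (in particular every element of $G\setminus N$) has $p$-power order; in case (2), Lemma \ref{conjugacy} tells us that every $g \in G \setminus N$ is conjugate to an element of $P$, and since $P$ is a $p$-group, $g$ has $p$-power order.

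For the forward direction, assume every element of $G \setminus N$ has $p$-power order and that $G$ is not a $p$-group; the goal is to show $G = PN$ and that $(G,P,P\cap N)$ is a Frobenius-Wielandt triple. First I would show $G = PN$. Any element of $G$ whose order is divisible by a prime $q \ne p$ must lie in $N$ by hypothesis; in particular, for each prime $q \ne p$ dividing $|G|$, every $q$-element is in $N$, so every Sylow $q$-subgroup is contained in $N$. This forces $|G/N|$ to be a $p$-power, i.e., $G/N$ is a $p$-group. The index $|G:PN|$ then divides both $|G:N|$ (a $p$-power) and $|G:P|$ (coprime to $p$), so $|G:PN| = 1$ and $G = PN$.

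It then remains to verify the Frobenius-Wielandt triple condition, and here Lemma \ref{conjugacy} makes the argument very short. Given any $g \in G \setminus N$, the hypothesis makes $\langle g \rangle$ a $p$-group, so by Sylow's theorem it lies in some Sylow $p$-subgroup of $G$, which is a conjugate $P^x$ of $P$. Hence $g^{x^{-1}} \in P$, showing every element of $G \setminus N$ is conjugate to an element of $P$. With $H = P$ and $H \cap N = P \cap N$, Lemma \ref{conjugacy} immediately gives that $(G,P,P \cap N)$ is a Frobenius-Wielandt triple.

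There is no serious obstacle in this argument beyond marshalling the Sylow and index arithmetic in the $G = PN$ step. The key conceptual point is recognizing that the $p$-power-order hypothesis forces $G/N$ to be a $p$-group, after which the standard Sylow conjugacy argument reduces everything to checking the single hypothesis of Lemma \ref{conjugacy}.
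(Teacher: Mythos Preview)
Your proof is correct and follows essentially the same approach as the paper's: both directions hinge on Lemma \ref{conjugacy}, with the forward direction reducing to the observation that every element of $G\setminus N$ is conjugate into $P$ via Sylow. You in fact supply more detail than the paper does on why $G = PN$ (the paper simply asserts it), but the logical structure is identical.
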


\begin{proof}
Suppose first that every element of $G \setminus N$ has $p$-power order.  If $G$ is a $p$-group, then the result is obvious.  Thus, we assume that $G$ is not a $p$-group.  Then $G = PN$.  Notice that all the elements of $G \setminus N$ are conjugate to an element of $P$.  By Lemma \ref{conjugacy}, $(G,P,P \cap N)$ is a Frobenius-Wielandt triple.  Conversely, if $G$ is a $p$-group, then obviously every element in $G \setminus N$ has $p$-power order.  Thus, suppose that $G = PN$ and $(G,P,P \cap N)$ is a Frobenius-Wielandt triple.  Then by Lemma \ref{conjugacy} every element in $G \setminus N$ is conjugate to an element in $P$.  Thus, every element in $G \setminus N$ has $p$-power order. 
\end{proof}

We note that $N$ need not be solvable in Theorem \ref{p-power}.  Let $G$ be the group $M_{10}$ and take $N$ to be the normal subgroup isomorphic to ${\rm PSL} (2,9) \cong A_6$.  One can see that all of the elements in $G \setminus N$ have order $4$ or $8$, and thus, have $2$-power order.  At this time, this essentially is the only example we know of where $N$ is not solvable.  It would be interesting to study the question: suppose $G$ is a group, $N$ is a normal subgroup, $p$ is a prime, $P$ is a Sylow $p$-subgroup so that that $(G,P,P \cap N)$ is a Frobenius-Wielandt group, $O_p (G) = 1$, and $G$ is nonsolvable.  Is this enough to imply that $G \cong M_{10}$?  Or do other examples exist?

Next we look at necessary and sufficient conditions for $N$ normal in $G$, $G \setminus N$ to have all elements having prime power order, and $G/N$ being solvable.   We see that we obtain iterated Frobenius-Wielandt triples.

\begin{theorem} \label{triples}
Let $G$ be a group and let $N$ be a normal subgroup so that $G/N$ is solvable.  Then all elements in $G \setminus N$ have prime power order if and only if one of the following occur:
\begin{enumerate}
\item $G$ is a $p$-group for some prime $p$.
\item There is a prime $p$ and a Sylow $p$-subgroup $P$ so that $G = NP$ and $(G,P,P \cap N)$ is a Frobenius-Wielandt triple.
\item There are primes $p$ and $q$ and Sylow $p$- and $q$-subgroups $P$ and $Q$ respectively, and a normal subgroup $M$ in $G$ so that
      \begin{enumerate}
      	\item $M = NQ$ and $G = MP$.
      	\item $(G,P,P \cap M)$ is a Frobenius-Wielandt triple.
      	\item Either $M = Q$ or $(M,Q,Q \cap N)$ is a Frobenius-Wielandt triple.
      \end{enumerate}
\item There are primes $p$ and $q$ and Sylow $p$- and $q$-subgroups $P$ and $Q$ respectively, and normal subgroups $M$ and $K$ in $G$ so that 
      \begin{enumerate}
      	\item $K = N(K \cap P)$, $M = KQ$, and $G = MP$.
      	\item $(G,P,P \cap M)$ and $(M,Q,Q \cap K)$ are Frobenius-Wielandt triples.
      	\item Either $K \le P$ or $(K,P \cap K,P \cap N)$ is a Frobenius-Wielandt triple.
      \end{enumerate} 
\end{enumerate}
\end{theorem}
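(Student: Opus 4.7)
The plan is to prove necessity by applying Higman's classification of solvable groups with all elements of prime-power order (Theorem~\ref{solvable}) to the quotient $G/N$, and to prove sufficiency by repeated use of Theorem~\ref{p-power} along a chain of normal subgroups. The key preliminary observation is that every nontrivial element $gN$ of $G/N$ has prime-power order, since $g\in G\setminus N$ has prime-power order by hypothesis and $|gN|$ divides $|g|$. Because $G/N$ is solvable, Higman's theorem forces $G/N$ to be a $p$-group, a $\{p,q\}$-Frobenius group, or a $\{p,q\}$-$2$-Frobenius group.

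For the necessity direction, I would next argue that in the Frobenius and $2$-Frobenius cases each Frobenius kernel appearing in the decomposition is nilpotent (Thompson) and so a prime-power group by EPPO, while each corresponding Frobenius complement, having all Sylow subgroups cyclic or generalised quaternion, is likewise forced to be a prime-power group by EPPO. Labelling primes appropriately this produces a chain $N=K_0\le K_1\le\cdots\le K_r=G$ with $r\in\{1,2,3\}$ whose successive quotients are $p$-groups or $q$-groups with the two primes alternating. Lifting to $G$ by fixing a Sylow $p$-subgroup $P$ of $G$ containing a chosen Sylow $p$-subgroup of the lowest $p$-type member of the chain, and a Sylow $q$-subgroup $Q$ of $G$ inside the first $q$-extension, one obtains the factorisations $K_i=K_{i-1}(K_i\cap P)$ or $K_i=K_{i-1}Q$ recorded in cases (2), (3), (4). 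For any $g\in K_i\setminus K_{i-1}$, the coset $gK_{i-1}$ is nontrivial in a prime-power group, so the prime power $|g|$ must be a power of the corresponding prime; Theorem~\ref{p-power} applied at each link then yields a Frobenius-Wielandt triple (or the degenerate single-prime subcase $M=Q$ or $K\le P$), giving the structure of case (3) or case (4). Cases (1) and (2) follow at once from Theorem~\ref{p-power} when $G/N$ is a $p$-group.

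For sufficiency, in each of (1)--(4) I would partition $G\setminus N$ according to the given chain of normal subgroups, for example $G\setminus N=(G\setminus M)\cup(M\setminus K)\cup(K\setminus N)$ in case (4), and apply Theorem~\ref{p-power} at each link, using the supplied Frobenius-Wielandt triple or its degenerate analogue, to conclude that every element of the corresponding piece has prime-power order.

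The main obstacle will be the coherent choice of $P$ and $Q$ in the necessity argument: a single Sylow $p$-subgroup $P$ and a single Sylow $q$-subgroup $Q$ of $G$ must simultaneously realise every Frobenius-Wielandt triple along the chain. This reduces to the standard observation that for $K\triangleleft G$ the intersection $P\cap K$ is a Sylow $p$-subgroup of $K$, together with a Sylow conjugation argument to align the various $P$'s (and $Q$'s) extracted from the successive quotients.
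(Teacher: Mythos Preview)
Your proposal is correct and follows essentially the same route as the paper: reduce to Higman's classification of $G/N$, build the chain $N\le K\le M\le G$ from the Frobenius or $2$-Frobenius structure, and apply Theorem~\ref{p-power} at each link in both directions. The only cosmetic difference is that you re-derive the fact that each factor in the chain is a prime-power group from Thompson's theorem and the Sylow structure of Frobenius complements, whereas the paper simply invokes Theorem~\ref{solvable}, which already records that a solvable EPPO group is a $\{p,q\}$-group; also, the ``coherence'' obstacle you flag does not arise in the paper because Theorem~\ref{p-power} is stated for an arbitrary Sylow $p$-subgroup, so once you fix $P\in{\rm Syl}_p(G)$ the intersection $P\cap K$ is automatically the Sylow subgroup needed for the inner triple.
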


\begin{proof}
We first assume that all elements in $G \setminus N$ have prime power order.  This implies that all elements in $G/N$ have prime power order.  We know that $G/N$ is one of the following: (1) a $p$-group for some prime $p$, (2) a Frobenius group whose Frobenius kernel is a $q$-group and whose Frobenius complement is a $p$-group for distinct primes $p$ and $q$, or (3) a $2$-Frobenius group that is a $\{ p, q\}$- group for distinct primes $p$ and $q$.  

If $G$ is a $p$-group, then we have Conclusion (1).  Thus, we may assume that $G$ is not a $p$-group.   Thus, we can set $M$ to be minimal so that $M$ is normal in $G$, $N \le M < G$, and $G/M$ is a $p$-group for some prime $p$.  We know that all elements of $G \setminus M$ are contained in $G \setminus N$ and so have prime power order. Also, since $p$ divides $o(gM)$ for all $g \in G \setminus M$, we see that $p$ divides $o (g)$ for all $g \in G \setminus M$.  We conclude that all elements in $G \setminus M$ have $p$-power order.  By Theorem \ref{conjugacy}, we see that $G = MP$ and $(G,P,P \cap M)$ is a Frobenius-Wielandt triple.  If $M = N$, then we have Conclusion (2).

We assume $N < M$.  We can now set $K$ minimal so that $K$ is normal in $G$, $N \le K < M$, and $M/K$ is a $q$-group.  Note that we must be in the situation where $G/K$ is a Frobenius group.  As in the last paragraph, we can argue that all of the elements in $M \setminus K$ have $q$-power order.  Using Theorem \ref{p-power}, we have either $M$ is a $q$-group or $M = KQ$ and $(M,Q,Q \cap K)$ is a Frobenius-Wielandt triple.  If $K = N$, then we have Conclusion (3).  Note that if $M$ is a $q$-group, then we must have $K = N$ and $M \le Q$.  Since $G/M$ is $p$-group, we conclude that $M = Q$.

Finally, we suppose that $N < K$.  Observe that we are in the situation where $G/N$ is a $2$-Frobenius group.  In this case, we argue as in the previous two paragraphs to see that all elements in $K \setminus N$ have $p$-power order.  By Theorem \ref{p-power}, we see that either $K$ is a $p$-group or  $K = N (K \cap P)$ and $(K, K \cap P, N \cap P)$ is a Frobenius-Wielandt triple. This proves Conclusion (4).

Conversely, suppose (1), (2), (3), or (4) occurs.  Obviously, if $G$ is a $p$-group, then all elements in $G \setminus N$ have prime power order.  In (2), we apply Theorem \ref{p-power} to see that all of the elements in $G \setminus N$ are $p$-powered ordered and so, have prime power order.  In (3), we apply Theorem \ref{p-power} to see that all of the elements in $G \setminus M$ have $p$-power order and the elements in $M \setminus N$ have $q$-power order; so all elements in $G \setminus N$ have prime power order.  In (4), we use Theorem \ref{p-power} to see that all of the elements in $G \setminus M$ and $K \setminus N$ have $p$-power order and in $M \setminus K$ have $q$-power order; so all elements in $G \setminus N$ have prime power order.
\end{proof}

We consider some examples.  In particular, we present examples that are not Frobenius groups or $2$-Frobenius groups.

Suppose $M_1 = A_4 \times Z_9$ and have $\sigma$ be an automorphism of order $2$ so that $\sigma$ acting on $A_4$ is isomorphic to $S_4$ and $\sigma$ is the automorphism of order $2$ on $Z_9$.  Take $G_1$ to be the semi-direct product of $\langle \sigma \rangle$ acting on $M_1$.   We take $K_1$ to be the Sylow $2$-subgroup in $A_4$, so $K_1$ is a Klein $4$-group and take $N_1 = K_1 \times Z_9$.  Take $p = 2$ and $q = 3$.  Write $P_1$ for a Sylow $2$-subgroup of $G_1$ and $Q_1$ for a Sylow $3$-subgroup of $G_1$.  Observe that $P_1 \cap M_1 = K_1$ and $G = N_1 P_1$.  Also, $N_1$ is normal in $G_1$ and $N_1/K_1 \cong S_3$ is a Frobenius group.  By Lemma \ref{conjugacy}, $(G_1,P_1,N_1)$ is a Frobenius-Wielandt triple.   Similarly, $N_1 \cap Q_1 = Z_9$ and $M_1/Z_9 \cong A_4$ is a Frobenius group.  Again, applying Lemma \ref{conjugacy}, $(M_1,Q_1,Z_9)$ is a Frobenius Wielandt triple.  By Theorem \ref{triples} (2), all elements of $G_1 \setminus N_1$ have prime power order.

We now observe that there is a $2$-Frobenius group of the form $(Z_2)^6 \rtimes Z_9 \rtimes Z_2$.  In particular, it can be viewed as the semi-direct product of the Frobenius group $Z_9 \rtimes Z_2$ acting faithfully on $(Z_2)^6$.  Notice that $Z_9 \times Z_2$ acts on $(Z_2)^2$ with $Z_3$ in the kernel of the action.  We define $G_2$ to be the semi-direct product of $Z_9 \rtimes Z_2$ acting on $L_2 = (Z_2)^6 \times (Z_2)^2$ so that the action on $(Z_2)^6$ is faithful and the action on $(Z_2)^2$ has $Z_3$ in the kernel of the action.  We take $L_2 < N_2 < M_2 < G_2$ so that $|G_2:M_2| = 2$ and $|M_2:N_2| = |N_2:L_2| = 3$.  Take $p = 2$ and $q = 3$.  Write $P_2$ for a Sylow $2$-subgroup of $G_2$ and $Q_2$ for a Sylow $3$-subgroup of $G_2$.  Observe that $G = M_2 P_2$ and $M_2 \cap P_2 = L_2$.  Notice that $G/L_2 \cong Z_9 \rtimes Z_2$, thus $(G, P_2, L_2)$ is a Frobenius-Wielandt triple.  Let $C_1 = (Z_2)^2$, $C_2 = (Z_2)^6$, and $Q^*$ be the subgroup of order $3$ in $Q_2$.  Observe that $C_1 = C_{L_2} (Q^*)$ and $N_2 = C_1 \times (C_2 \rtimes Q^*)$.  Notice that $M_2/(Z_2^6 \rtimes Q^*) \cong A_4$, so by Lemma \ref{conjugacy} $(M_2,Q,Q\cap N_2)$ is a Frobenius-Wielandt triple.  Applying Theorem \ref{triples} (2) all elements of $G_2 \setminus N_2$ have prime power order.  

We now obtain restrictions on groups with iterated Frobenius-Wielandt triples.

\begin{theorem} \label{Frobenius}
Let $G$ be a group.  Let $p$ and $q$ be primes so that $P$ and $Q$ are Sylow $p$ and $q$-subgroups, respectively and $M$ and $N$ are normal subgroups so that $G = MP$ and $M = NQ$.  Assume also that $(G,P,P \cap M)$ and $(M,Q,Q \cap N)$ are Frobenius-Wielandt triples.  Then the following are true:
\begin{enumerate}
\item $N_G (Q)$ is a Frobenius group with Frobenius kernel $Q$.  
\item $G/N$ is a Frobenius group with Frobenius kernel $M/N$.
\item If $P$ is chosen so that $P \cap N_G (Q) = N_P (Q)$ is a Sylow $p$-subgroup of $N_G (Q)$, then $N_P (Q)$ Frobenius complement of $N_G (Q)$ and $P = (N \cap P) \rtimes N_P (Q)$.
\item If $O^p (N) < N$, then $G/O^p (N)$ is a $2$-Frobenius group.
\item Either $N_G (Q \cap N) = N_G (Q)$ or $N_G (Q \cap N)/(Q \cap N)$ is a $2$-Frobenius group.
\item $G$ is a $\{ p, q\}$-group.
\end{enumerate}
\end{theorem}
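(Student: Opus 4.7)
The plan is to apply Theorem \ref{p-power} twice: the Frobenius-Wielandt triple $(G,P,P \cap M)$ forces every element of $G \setminus M$ to be a $p$-element, and $(M,Q,Q \cap N)$ forces every element of $M \setminus N$ to be a $q$-element. I will also repeatedly use that any Frobenius-Wielandt triple $(X,Y,Z)$ satisfies $N_X(Y) = Y$, since any $x \in N_X(Y) \setminus Y$ would give $Y \cap Y^x = Y \not\le Z$; in particular $N_M(Q) = Q$. For (1), Frattini applied to the Sylow $q$-subgroup $Q$ of the normal subgroup $M$ gives $G = M N_G(Q)$, so $N_G(Q)/Q \cong G/M$ is a $p$-group, and Schur--Zassenhaus splits $N_G(Q) = Q \rtimes H$ for some Sylow $p$-subgroup $H$. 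To see $H$ acts fixed-point-freely on $Q$, suppose $h \in H \setminus \{1\}$ has $1 \neq t \in C_Q(h)$; then $ht$ has order divisible by $pq$. Either $h \notin M$, in which case $ht \notin M$ contradicts Theorem \ref{p-power}, or $h \in M \cap N_G(Q) = N_M(Q) = Q$, forcing the $p$-element $h$ into the $q$-group $Q$ to be trivial, again a contradiction.

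For (2), each non-identity element of $G/N$ has prime power order, so Higman's Theorem \ref{solvable} forces $G/N$ to be a $p$-group, a Frobenius group, or a $2$-Frobenius group. The $p$-group case is excluded by $M \neq N$, and the $2$-Frobenius case fails because in a $\{p,q\}$-$2$-Frobenius group the quotient by the unique normal Sylow is Frobenius, whereas $G/N$ modulo $M/N$ equals the pure $p$-group $G/M$. Hence $G/N$ is Frobenius with kernel $M/N$. For (3), choose $P$ so that $N_P(Q) = P \cap N_G(Q)$ is a Sylow $p$-subgroup of $N_G(Q)$, identifying $N_P(Q)$ with the complement $H$ of (1). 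An order count then yields $P = (P \cap N) \rtimes N_P(Q)$: $P \cap N = P \cap M$ has index $|G:M|$ in $P$, $|N_P(Q)| = |G:M|$, and their intersection consists of $p$-elements of $N \cap N_G(Q) \le N_M(Q) = Q$ and hence is trivial.

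For (4), pass to $\tilde G := G/O^p(N)$, $\tilde N := N/O^p(N)$ (a nontrivial normal $p$-subgroup), and $\tilde M := M/O^p(N)$. Elements of $\tilde M \setminus \tilde N$ are images of $q$-elements and so are themselves $q$-elements, while $\tilde N$ is a $p$-group; hence every element of $\tilde M$ has prime power order. Higman's Theorem \ref{solvable} again leaves only the Frobenius case (the $2$-Frobenius case fails for the same reason as in (2)), so $\tilde M$ is Frobenius with kernel $\tilde N$ and complement $\cong Q/(Q \cap N)$; combined with (2), $\tilde G$ is $2$-Frobenius. For (5), apply Lemma \ref{frob quo} to $(M,Q,Q \cap N)$. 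If $N_M(Q \cap N) = Q$, a Frattini count yields $|N_G(Q \cap N)| = |N_G(Q)|$ and hence $N_G(Q \cap N) = N_G(Q)$; otherwise $N_M(Q \cap N)/(Q \cap N)$ is Frobenius, and Frattini further gives $N_G(Q \cap N)/N_M(Q \cap N) \cong G/M$ (a $p$-group), which combined with the Frobenius structure of $G/N$ from (2) assembles $N_G(Q \cap N)/(Q \cap N)$ as a $2$-Frobenius group.

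The main obstacle is (6). Suppose for contradiction that some prime $r \notin \{p,q\}$ divides $|N|$, and let $R$ be a Sylow $r$-subgroup of $N$; since $|G/N|$ is a $\{p,q\}$-number by (2), $R$ is also a Sylow $r$-subgroup of $G$. I first establish $C_G(R) \le N$: any $g \in C_G(R) \setminus N$ would be a $p$- or $q$-element, so with any $1 \neq t \in R$ the product $gt = tg$ would have mixed order divisible by $r$ and by $p$ or $q$ yet lie in $G \setminus N$, contradicting Theorem \ref{p-power}. Frattini then gives $N_G(R)/N_N(R) \cong G/N$ Frobenius, and the same mixed-order argument shows every element of $N_G(R) \setminus N_N(R)$ acts fixed-point-freely by conjugation on $R$. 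The hard step will be to extract $R = 1$ from this: my plan is to lift the Frobenius structure of $G/N$ into the faithful action of $N_G(R)/C_G(R)$ on $R$ inside $\mathrm{Aut}(R)$ and use that any subgroup above $N_N(R)/C_G(R)$ whose non-identity elements all act fixed-point-freely must itself be a Frobenius complement, forcing (by the classification of Frobenius complements) cyclic or generalized quaternion Sylows that contradict the Frobenius structure of $G/N$ unless $R = 1$.
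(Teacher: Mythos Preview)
Your arguments for (1)--(5) are essentially correct and close to the paper's, with only cosmetic differences: the paper gets (2) directly from the isomorphism $G/N \cong N_G(Q)/(Q\cap N)$ rather than via Higman, and for (5) it argues exactly as you do through Lemma~\ref{frob quo}. One small omission: when you invoke Higman on $G/N$ you should first note that $G/N$ is solvable, which follows from Burnside's $p^aq^b$ theorem since $|G/N|$ is a $\{p,q\}$-number.

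The gap is in (6). You correctly establish that every element of $N_G(R)\setminus N_N(R)$ acts fixed-point-freely on $R$, but your plan to ``lift the Frobenius structure of $G/N$'' and exhibit it as a Frobenius complement acting on $R$ does not go through. What you would need is a subgroup $S\le N_G(R)$ with $S\cap N_N(R)=C_G(R)$ and $SN_N(R)=N_G(R)$, so that $S/C_G(R)\cong G/N$ acts faithfully and fixed-point-freely on $R$; then indeed a Frobenius group of order divisible by $pq$ cannot be a Frobenius complement (it contains a noncyclic subgroup of order $pq$). But there is no reason for such a complement $S$ to exist: $N_N(R)/C_G(R)$ need not have order coprime to $|G/N|$, so neither Schur--Zassenhaus nor any splitting argument applies, and knowing only that \emph{elements} outside a normal subgroup act fixed-point-freely does not produce a subgroup that does. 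Also, the specific contradiction you name---cyclic or generalized quaternion Sylows---is not the right one, since $G/N$ may well have cyclic Sylow subgroups; the obstruction is the $pq$-subgroup criterion.

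The paper takes a different route for (6). It works with a minimal normal subgroup $K$ of $G$ contained in $N$ rather than a full Sylow $r$-subgroup. In the solvable case $K$ is elementary abelian; if its prime $r$ lies in $\{p,q\}$ one inducts on $G/K$, and otherwise one lets the Frobenius group $N_G(Q)$ from (1) act on $K$ (which it does, since $K\trianglelefteq G$). A mixed-order argument gives $C_K(Q)=0$, and then Theorem~15.16 of Isaacs' \emph{Character Theory} (if a Frobenius group with kernel $Q$ acts on a module $K$ in coprime characteristic with $C_K(Q)=0$, then $C_K(H)\neq 0$ for the complement $H$) yields a nontrivial fixed point of $N_P(Q)$ in $K$, contradicting the prime-power-order hypothesis on $G\setminus M$. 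For the nonsolvable case the paper passes to $N_G(R)$ for $R$ a Sylow $r$-subgroup of $K$, verifies that the Frobenius--Wielandt triple hypotheses descend to $(N_G(R),N_P(R),N_P(R)\cap N_M(R))$ and $(N_M(R),N_Q(R),N_Q(R)\cap N_N(R))$, and invokes minimality. The key input you are missing is precisely a result of Isaacs~15.16 type that converts fixed-point-freeness of the kernel into a fixed point for the complement.
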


\begin{proof}
Since $(M,Q,Q \cap N)$ is a Frobenius-Wielandt triple, we know that $Q = N_M (Q)$.   This implies that $N_G (Q) \cap M = Q$.  On the other hand, by the Frattini argument, we know that $G = M N_G (Q)$.  We have $|N_G (Q):N_M (G)| = |G:M| = |MP:M| = |P:P \cap M|$.  By Theorem \ref{conjugacy}, all elements in $G \setminus M$ have $p$-power order.  We see that $N_G (Q) \setminus N_M (G) \subseteq G \setminus M$.   Suppose that $P$ is chosen so that $N_P (Q)$ is a Sylow $p$-subgroup of $N_G (Q)$.  Thus, all elements in $N_G (Q) \setminus N_M (Q) = N_G (Q) \setminus Q$ are conjugate to $N_P (Q)$.    We conclude that $N_G (Q)$ is a Frobenius group with Frobenius kernel $Q$ and Frobenius complement $N_P (Q)$.  We see that $M \cap P = N \cap P$ and $P = (N \cap P) \rtimes N_P (Q)$.  Note that $G/N \cong N_G (G)/(Q \cap N)$ is a Frobenius group.  Since $M/N \cong Q/Q \cap N$, we deduce that $M/N$ is the Frobenius kernel.

Suppose that $O^p (N) < N$.  We know that all of the elements in $M/O^p(N) \setminus N/O^p(N)$ have $q$-power order.  Hence, $M/O^p (N)$ is a Frobenius group with Frobenius kernel $N/O^p (N)$.  Since we already know that $G/N$ is a Frobenius group with Frobenius kernel $M/N$, it follows that $G/O^p (N)$ is a $2$-Frobenius group.  

It is not difficult to see that $N_G (Q)$ will normalize $Q \cap N$, so we have $N_G (Q) \le N_G (Q \cap N)$.  Suppose that $N_G (Q) < N_G (Q \cap N)$.  We know that $N_M (Q \cap N) = M \cap N_G (Q \cap N)$.  Hence, $Q = N_M (Q) < N_M (Q \cap N)$.  Thus, we may apply Lemma \ref{frob quo} to see that $N_M (Q \cap N)/(Q \cap N)$ is a Frobenius group.  It is not difficult to see that $N_N (Q \cap N)/(Q \cap N)$ will be the Frobenius kernel.  Observe that $N_G (Q \cap N)/N_N (Q \cap N) \cong G/N$, so $N_G (Q \cap N)/N_N (Q \cap N)$ is a Frobenius group with Frobenius kernel $N_M (Q \cap N)/N_N (Q \cap N)$.  We conclude that $N_G (Q \cap N)/(Q \cap N)$ is a $2$-Frobenius group.

We now work to prove (6).  We continue to assume that $P$ has been replaced by a conjugate so that $P \cap N_G (Q)$ is a Sylow $p$-subgroup of $N_G (Q)$.  We have just shown that $N_G (Q)$ is a Frobenius group with Frobenius kernel $Q$ and Frobenius complement $N_P (Q)$.  Let $K$ be a minimal normal subgroup of $G$ contained in $N$.  We first suppose that $K$ is an elementary abelian $r$-group for some prime $r$.  
We know that $G/N$ is a Frobenius group with Frobenius complement $NP$.  Observe that $KP \le NP < G$.  

Suppose that $M = KQ$.  If $r = p$ or $q$, then this would imply that $M$ and hence $G$ is a $\{ p, q \}$-group and we would have (6).  Thus, when $M = KQ$, we assume that $r \not\in \{ p, q \}$.  Notice that $G/K$ will be a $\{p, q\}$-group in this siutation.  On the other hand, if $KQ < M$, then $(G/K,PK/K,(P \cap M)K/K)$ and $(M/K,QK/K,(Q \cap N)K/K)$ are Frobenius-Wielandt triples.  By induction, $N/K$ and hence, $G/K$ would be a $\{ p, q\}$-group.  Thus, if $r$ is $p$ or $q$, we are done.  Thus, in all cases, we may assume that $r$ is not $p$ or $q$.

We know that $N_G (Q)$ is a Frobenius group with Frobenius complement $N_P (Q)$ and Frobenius kernel $Q$.  Thus, $N_G (Q)$ acts on $K$.  If $C_M (K) \not\le N$, then there exist elements in $KQ \setminus N \subseteq G \setminus N$ which have orders divisible by both $q$ and $r$ which is a contradiction.  Thus, we must have $C_M (K) \le N$.  We now know that $N_G (Q)/C_M(K)$ is a Frobenius group.  By Theorem 15.16 of Isaacs' Character Theory \cite{text}, we see that $C_K (N_P (Q)) > 1$.  This implies that there exist elements in $K N_P (G) \setminus N \subseteq G \setminus N$ that have orders divisible by both $p$ and $r$, a contradicton.  Thus, we must have $r \in \{ p , q \}$.  This proves the theorem when $G$ is solvable.

We now assume that $K$ is not solvable.  In fact, we may assume that $G$ is minimal among nonsolvable examples.  This implies that $K$ is divisible by a prime $r$ that is not $p$ and $q$.  Let $R$ be a Sylow $r$-subgroup of $K$.  We know that $M = K N_M(R)$ and $G = K N_G (R)$.  Since $N_K (R) < R$, we see that $N_G (R) < G$.   We may choose $P$ and $Q$ so that $P \cap N_G (R)$ and $Q \cap N_G (R)$ are Sylow $p$- and Sylow $q$-subgroups of $N_G (R)$, respectively.  We write $N_P (R) = P \cap N_G (R)$ and $N_Q (R) = Q \cap N_G (R)$.  Observe that $|N_G (R):N_M (R)|$ divides $|G:M|$.  This implies $|N_G (R):N_M(R)|$ is a $p$-power, and so, $N_G (R) = N_M (R) N_P (R)$.  Also, $|N_M (R):N_N (R)|$ divides $|M:N|$.  Hence, $|N_M (R):N_N (R)|$ is a $q$-power, and thus, $N_M (R) = N_N (R) N_Q (R)$.   Suppose $x \in N_G (R) \setminus N_P(R)$.  This implies that $x \in G \setminus P$.  We know that $P \cap P^x \le P \cap M$.  Hence, $N_P (R) \cap (N_P (R))^x = N_G (R) \cap P \cap (N_G (R))^x \cap P^x \le N_P (R) \cap M$.   Similarly, if $y \in N_M (R) \setminus N_Q (R)$, then $y \in M \setminus Q$.  WE have $Q \cap Q^y \le Q \cap N$.  Hence, $N_Q (R) \cap (N_Q (R))^y = N_G (R) \cap Q \cap (N_G (R))^y \cap Q^y \le N_G (R) \cap Q \cap N \le N_Q (R) \cap N$.  Hence, $N_G (R)$ satisfies the hypotheses of the theorem.  Also, $|N_G (R)| < |G|$ and $|N_G (R)|$ is divisible by $p$, $q$, and $r$.  Thus, $N_G (R)$ cannot be solvable, and it is a contradiction to the assumption that it is a minimal nonsolvable counterexample.  Hence, we have proved Conclusion 6. 
\end{proof}

The following Corollary is Theorem \ref{main two}.

\begin{corollary}\label{pqfrob}
Suppose $G$ has a normal subgroup $N$ so that every element in $G \setminus N$ has prime power order and the orders of these elements are divisible by the distinct primes $p$ and $q$.  Then $G/N$ is either a Frobenius or a $2$-Frobenius group and $G$ is a $\{ p , q\}$-group.
\end{corollary}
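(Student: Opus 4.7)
The plan is to read off the structure of $G/N$ first, then invoke Theorem \ref{triples} to split into structural cases, and finally apply Theorem \ref{Frobenius}(6) to conclude that $G$ is a $\{p,q\}$-group.

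First I would analyze $G/N$. Every nontrivial coset $gN$ in $G/N$ arises from some $g\in G\setminus N$, so $o(gN)$ divides $o(g)$, which is by hypothesis a $p$-power or a $q$-power. Hence every element of $G/N$ has prime power order and only the primes $p$ and $q$ can divide $|G/N|$; both actually do, since $G\setminus N$ contains elements of both $p$-power and $q$-power order. Inspecting Brandl's classification (Theorem \ref{nonsolvable}) shows that every nonsolvable group with all elements of prime power order has at least three distinct prime divisors in its order. Therefore $G/N$ must be solvable, and Higman's theorem (Theorem \ref{solvable}) immediately yields the first half of the conclusion: $G/N$ is either a Frobenius group or a $2$-Frobenius group.

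Next I would apply Theorem \ref{triples}, which is available because $G/N$ is solvable and every element of $G\setminus N$ has prime power order. Cases (1) and (2) of that theorem can be ruled out directly: (1) would make $G$ a $p$-group and leave no elements of $q$-power order outside $N$, while (2) would force every element of $G\setminus N$ to have $p$-power order, contradicting the hypothesis that $q$ divides some such order. So we are in case (3) or case (4). In case (3) with $M=Q$, we have $G=QP$ with $P\cap Q=1$, so $G$ is manifestly a $\{p,q\}$-group. Otherwise in case (3), the data $G=MP$, $M=NQ$, together with the Frobenius-Wielandt triples $(G,P,P\cap M)$ and $(M,Q,Q\cap N)$, match the hypotheses of Theorem \ref{Frobenius} verbatim, and conclusion (6) of that theorem gives that $G$ is a $\{p,q\}$-group. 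In case (4) the analogous data $G=MP$, $M=KQ$, and the Frobenius-Wielandt triples $(G,P,P\cap M)$ and $(M,Q,Q\cap K)$ match the hypotheses of Theorem \ref{Frobenius} with $K$ playing the role of the normal subgroup $N$, so conclusion (6) again delivers the $\{p,q\}$-conclusion.

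The only step that requires any genuine thought is the solvability reduction for $G/N$: one must rule out every nonsolvable possibility using only the two-prime constraint. This is immediate by inspection of Brandl's list, but it is the essential step that allows the classification machinery of Theorems \ref{triples} and \ref{Frobenius} to take over; everything after that is bookkeeping matching the conclusions of Theorem \ref{triples} against the hypotheses of Theorem \ref{Frobenius}.
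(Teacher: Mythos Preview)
Your proof is correct and follows essentially the same route as the paper: reduce to $G/N$ solvable via Brandl's list, read off the Frobenius/$2$-Frobenius dichotomy from Higman, then feed the Frobenius--Wielandt data from Theorem~\ref{triples} into Theorem~\ref{Frobenius}(6). If anything, you are slightly more careful than the paper in separating case~(4) of Theorem~\ref{triples} (where one must take $K$, not $N$, as the inner normal subgroup when invoking Theorem~\ref{Frobenius}); the paper's phrasing ``$M=NQ$'' silently absorbs this.
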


\begin{proof}
If all elements of $G \setminus N$ have prime power order, then all elements of $G/N$ have prime power order.  If $G/N$ were nonsolvable, then $|G/N|$ would be divisible by at least three primes by Theorem \ref{nonsolvable}.  Thus, $G/N$ must be solvable.  Obviously, $G/N$ is not a $p$-group; so from Theorem \ref{solvable}, we see that $G/N$ is either a Frobenius group or a $2$-Frobenius group.
	
In either case, by Theorem \ref{triples}, there is a normal subgroup $M$ and Sylow $p$- and $q$-subgroups $P$ and $Q$ respectively so that $G = MP$, $M = NQ$, $(G,P,P \cap M)$ is a Frobenius-Wielandt triple and either $M = Q$ or $(M,Q,Q \cap N)$ is a Frobenius-Wielandt triple.  If $M = Q$, then since $G = MP = QP$, we see that $G$ is a $\{ p, q\}$-group.  Otherwise, observe that $G$ now satisfies the condictions of Theorem \ref{Frobenius}, and so, $G$ is a $\{p,q\}$-group.
\end{proof}

Using Corollary \ref{pqfrob}, we are to prove following theorem.

\begin{theorem} \label{last}
Let $G$ be a group with a normal subgroup $N$ so that $G/N$ is not solvable.  Then all elements in $G \setminus N$ have prime power order if and only if all elements in $G$ have prime power order.
\end{theorem}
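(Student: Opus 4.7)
The plan: The forward direction is immediate, since every element of $G$ having prime power order certainly implies the same for elements of $G \setminus N$. For the converse, I assume every element of $G \setminus N$ has prime power order and $G/N$ is nonsolvable. Since every nontrivial coset of $N$ lifts to a prime-power-order element of $G \setminus N$, the quotient $G/N$ is itself EPPO, and Theorem \ref{nonsolvable} applied to $G/N$ then forces $|G/N|$ to be divisible by at least three distinct primes (equivalently, by the contrapositive of Corollary \ref{pqfrob}, the orders of elements in $G \setminus N$ involve at least three primes).

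I would argue by contradiction: suppose some element of $N$ has order not a prime power. After replacing by a suitable power, I may assume $n \in N$ has order $st$ for distinct primes $s$ and $t$; write $n = n_s n_t$ for its commuting Sylow components. The central technical claim is that $C_G(n) \subseteq N$. Indeed, if $g \in C_G(n) \setminus N$ has prime-power order $p^a$, then $\langle g, n \rangle$ is abelian and $gn \in G \setminus N$ has prime-power order. However, computing inside $\langle g, n \rangle$, the $t$-part of $gn$ equals $n_t$ (hence nontrivial) whenever $p \neq t$, while the $s$- or $p$-part is nontrivial unless $g$ lies in $\langle n_s \rangle \subseteq N$, which is excluded; the case $p = t$ is symmetric. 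In every case $o(gn)$ has at least two distinct prime divisors, a contradiction.

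For the final contradiction I would first extend this argument: every element of $C_G(n_s) \setminus N$ must have $s$-power order (otherwise combining it with $n_s$ produces an element of $G \setminus N$ of non--prime-power order), and analogously for $n_t$. Applying Theorem \ref{p-power} to $(C_G(n_s), C_N(n_s), s)$, and noting that $n_t \in C_G(n_s)$ prevents $C_G(n_s)$ from being an $s$-group, I obtain a Frobenius-Wielandt triple $(C_G(n_s), S, S \cap C_N(n_s))$ with $S$ a Sylow $s$-subgroup, and an analogous triple for $n_t$. I would then pass to a minimal counterexample and consider a minimal normal subgroup $M$ of $G$ contained in $N$. By minimality, $G/M$ is EPPO, so Theorem \ref{nonsolvable} applied to $G/M$, together with the requirement that $G/N$ remain nonsolvable, pins down the structure: one is forced into Theorem \ref{nonsolvable}(2), with $M$ elementary abelian of exponent $t$, $s = 2$, $n_t \in M$, and $N/M$ contained in a natural $(G/K)$-module for some intermediate $K$ with $G/K$ one of $\mathrm{PSL}_2(4)$, $\mathrm{PSL}_2(8)$, $\mathrm{Sz}(8)$, or $\mathrm{Sz}(32)$. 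The contradiction is then extracted from the action on $M$ of an element of $G \setminus N$ whose prime-power order lies at a prime $p \notin \{s, t\}$ (such $p$ exists since $|G/N|$ has at least three prime divisors), together with the centralizer constraint $C_G(n) \le N$.

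I expect the most delicate step to be this final structural contradiction: translating $C_G(n) \le N$ into an incompatibility with the natural-module action of a nonsolvable EPPO simple group requires a careful representation-theoretic case analysis, and the possibility that $M$ is instead a direct product of nonabelian simple groups requires separate treatment using the classification of EPPO simple groups from Theorem \ref{nonsolvable}.
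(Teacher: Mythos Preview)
Your opening moves are correct: the observation that $C_G(n)\le N$ for any $n\in N$ of order $st$ with $s\ne t$ is valid, and so is the refinement that every element of $C_G(n_s)\setminus N$ must have $s$-power order. However, the proof is not complete. From the minimal-counterexample paragraph onward you are sketching a plan rather than executing one: the assertions ``$s=2$, $n_t\in M$'' require justification (they can be extracted from the fact that $G/M$ is EPPO and from the structure in Theorem~\ref{nonsolvable}(2), but you do not supply the argument), and the promised ``final structural contradiction'' via the action on $M$ of a $p$-element with $p\notin\{s,t\}$ is never actually produced. You yourself flag this as the delicate step and defer it to an unspecified representation-theoretic case analysis, together with a separate treatment of the case where $M$ is nonabelian. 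As written, the proposal stops exactly where the real work would begin.

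The paper's proof bypasses all of this with a much shorter argument that you overlooked: rather than studying a single bad element of $N$, it exploits Corollary~\ref{pqfrob} on \emph{subgroups} of $G$. For each nonsolvable EPPO group $G/N$ listed in Theorem~\ref{nonsolvable} one checks (by inspection) that there exist two distinct primes $p_1,p_2$ such that $G/N$ contains a solvable Frobenius $\{2,p_i\}$-subgroup $F_i/N$ for $i=1,2$. Each $F_i$ then satisfies the hypotheses of Corollary~\ref{pqfrob}, so $F_i$ is a $\{2,p_i\}$-group; in particular $N\le F_i$ is a $\{2,p_i\}$-group for both $i$, forcing $N$ to be a $2$-group. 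Hence every element of $N$ already has prime power order, and the theorem follows immediately. This avoids minimal counterexamples, centralizer analysis, and module considerations entirely; the only case-checking is the easy verification that the relevant dihedral or small Frobenius subgroups sit inside each group on the list.
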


\begin{proof}
If every element in $G$ has prime power order, then every element in $G \setminus N$ has prime power order.  We assume the converse.  We assume that every element in $G \setminus N$ has prime power order.  This implies that $G/N$ is nonsolvable and all elements in $G/N$ have prime power order.  Hence, $G/N$ is one of the groups listed in Theorem \ref{nonsolvable}.  We claim for each of those groups that there exist distinct primes $p_1$ and $p_2$ so that $G/N$ has a Frobenius $\{ 2, p_i \}$-subgroup for each $i$.  Let $F_i/N$ be a Frobenius $\{2, p_i \}$- subgroup of $G/N$.  Notice that $F_i/N$ is solvable, and every element in $F_i \setminus N$ is an element in $G \setminus N$; so every element in $F_i \setminus N$ has prime power order.  By Corollary \ref{pqfrob}, we see that $F_i$ is a $\{ 2, p_i \}$-group.  This implies that $N$ is a $\{ 2, p_i \}$-subgroup for $i = 1, 2$.  The only way this can occur is if $N$ is a $2$-group.  Now, we know every element in $N$ has $2$-power order and every element in $G \setminus N$ has prime power order; so we may conclude that every element of $G$ has prime power order.

To see the claim, observe that ${\rm PSL}_2 (4)$ contains $D_6$ and $D_{10}$, ${\rm PSL}_2 (7)$ contains $D_6$ and $D_{14}$, ${\rm PSL}_2 (8)$ contains $D_{14}$ and $D_{18}$, ${\rm PSL}_2 (9)$ contains $D_{10}$ and $D_{18}$, ${\rm PSL}_2 (17)$ contains $D_{18}$ and $D_{34}$, ${\rm PSL}_3 (4)$ contains many possibilities for example $D_6$ and $D_{10}$, ${\rm Sz} (8)$ contains $D_{14}$, the Frobenius group of order $20$, and the Frobenius group of order $52$, and ${\rm Sz} (32)$ contains $D_{62}$, the Frobenius group of order $100$, and the Frobenius group of order $164$.  For the groups in Theorem \ref{nonsolvable} with a nontrivial Fitting subgroup, we can take the product of the Fitting subgroup with a cyclic subgroup of odd order.  For ${\rm PSL}_2 (4)$, we would have subgroups of order $3$ and $5$; for ${\rm PSL}_2 (8)$, we would have subgroups of order $7$ and $9$; for ${\rm Sz} (8)$, the possible orders are $5$, $7$, or $13$; and for ${\rm SZ} (32)$, the possibilities are $25$, $31$, and $41$.
\end{proof} 

We close by proving Theorem \ref{main three}.

\begin{proof}[Proof of Theorem \ref{main three}]
We know that all the elements of $G \setminus N$ have prime power order, so all the elements of $G/N$ have prime power order.  Since three primes divide the orders of these elements, we know $G/N$ is not solvable by Theorem \ref{solvable}.   Applying Theorem \ref{last}, we see that every element in $G$ has prime power order.  Therefore, $G$ appears in the list in Theorem \ref{nonsolvable}.
\end{proof}

\end{document}